\definecolor{refkey}{rgb}{0,0,1}
\definecolor{labelkey}{rgb}{1,0,0}
\newcommand{\ol}{\overline}
\newcommand{\tht}{\theta}
\newcommand{\eq} [1] {\begin{equation}\label{#1}\quad}
\newcommand{\en} {\end{equation}}
\newcommand{\Tr}{\mathbb{\rm Tr}}
\newcommand{\Cof}{\mathop{\rm Cof}}
\newcommand{\tm}{\times}
\newcommand{\iy}{\infty}
\newcommand{\bT}{\mathbb{T}}
\newcommand{\bC}{\mathbb{C}}
\newcommand{\bP}{\mathbb{P}}
\newcommand{\bD}{\mathbb{D}}
\newcommand{\calA}{\mathcal{A}}
\newcommand{\calI}{\mathcal{I}}
\newtheorem{theorem}{\bf  Theorem}
\begin{document}
\begin{center}
{\bf On matrix generalization of Robinson's Energy Delay Theorem}\\[2mm]
{ L. Ephremidze$^a$ and W. H. Gerstacker$^b$}\\[5mm]
{\footnotesize $^a$ New York University Abu Dhabi, UAE, and  A. Razmadze Mathematical Institute, Georgia\\
$^b$ Institute for Mobile Communications, Erlangen, Germany \par}
\end{center}
\vskip+0.5cm
{\small{\bf Abstract.} An elementary proof of Robinson's Energy Delay Theorem on minimum-phase functions is presented. The proof is generalized to the matrix case as well which turns out to be simpler than the earlier one proposed in \cite{WG} for polynomial matrix functions.

\vskip+0.5cm
{\bf Keywords:} Minimum-phase functions; Hardy spaces
\par}

\section{Introduction}

Let $\bD$ be the unit disk in the complex plane and $\bT=\{z\in\bC:|z|=1\}$ be its boundary. The set of analytic in $\bD$ functions is denoted by $\calA(\bD)$. Consider the subset of functions
$$
f(z)=\sum_{n=0}^\iy a_nz^n\in\calA(\bD),
$$
satisfying
$$
\sum_{n=0}^\iy|a_n|^2<\iy.
$$
Such functions are known as $z$-{\em transforms} (resp. {\em transfer functions}) of discrete-time {\em causal signals} (resp. {\em filter impulse responses}) with a finite energy in engineering, while the subset of functions is called the Hardy space $H_2=H_2(\bD)$ in mathematics. It is well known that the boundary values of $f\in H_2$ exists a.e.,
\begin{equation}\label{01}
f^*(e^{i\tht})=\lim_{r\to 1-}f(re^{i\tht})\;\;\text{ for a.a.} \tht\in[0,2\pi)
\end{equation}
and $f^*\in L_2(\bT)$, the Lebesgue space of square integrable functions on $\bT$. (Actually, there is a one-to-one correspondence between the functions from the Hardy space $H_2$ and their boundary value functions, i.e. if $f,g\in H_2$ and $f^*(e^{i\tht})=g^*(e^{i\tht})$ for a.e. $\tht$, then $f=g$.) Furthermore, the so-called {\em Paley-Wiener condition} is satisfied:
\begin{equation}\label{02}
\int_0^{2\pi}\log|f^*(e^{i\tht})|\,d\tht >-\iy\,.
\end{equation}

For any function $f\in H_2$, the inequality
\begin{equation}\label{21}
|f(0)|\leq \exp\left(\frac1{2\pi}\int_0^{2\pi}\log|f^*(e^{i\tht})|\,d\tht\right)
\end{equation}
is valid (see, e.g., \cite[Th. 17.17]{Ru}) and those extreme functions for which the equality holds in \eqref{21} are called {\em outer} in mathematics and of {\em minimum-phase} in engineering (also the term {\em optimal} can optionally be used in both). Such functions play an important role in mathematics as well as in engineering. However, the original definition of outer functions, introduced by Beurling \cite{Beu} differs from the above definition and says that the representation
\begin{equation}\label{215}
 f(z)=c\cdot\exp\left(\frac 1{2\pi}
\int\nolimits_0^{2\pi}\frac{e^{i\theta}+z}{e^{i\theta}-z}\log
|f^*(e^{i\tht})|\,d\theta\right), \;\;\;\text{ where }\;|c|=1,
\end{equation}
holds for outer functions. This representation easily implies that the equality holds in \eqref{21} for outer functions and it can be proved that the converse is also true. Furthermore, Beurling \cite{Beu} proved that every $h\in H_2$ can be factorized as
\begin{equation}\label{22}
 h(z)=B(z)\calI(z)f(z),
\end{equation}
where $B(z)=\prod_{n=1}\frac{|z_n|}{z_n}\,\frac{z_n-z}{1-\ol{z}_nz}$ is a Blaschke product, $\calI$ is a bounded analytic function without zeros inside $\bD$, which satisfies $|\calI^*(e^{i\tht})|=1$ for a.e. $\tht$ (such functions are called {\em singular inner} functions, if we exclude the requirement $\calI(z)\not=0$ for $z\in\bD$, then we get the definition of {\em inner} function, i.e. $B\calI$ is an inner function), and $f$ is an outer function. (Observe that $|h^*(e^{i\tht})|=|f^*(e^{i\tht})|$ a.e.) In these terms, a function is outer if and only if an inner factor does not exist in factorization \eqref{22}, i.e. $B\equiv\calI\equiv 1$.

These definitions and factorization \eqref{22} are now classical in mathematical theory of Hardy spaces. However engineers frequently discard the middle term in the factorization \eqref{22} (a singular inner factor never exists for rational functions, namely, it has the form $ \calI(z)=\exp\big(\frac 1{2\pi}
\int\nolimits_0^{2\pi}\frac{e^{i\theta}+z}{e^{i\theta}-z}\,d\mu_s(\theta)\big)$, where $\mu_s$ is a singular measure on $[0,2\pi)$, which never encounters in practise) and define a minimum-phase function $f\in H_2(\bD)$ by the condition $1/f\in\calA(\bD)$ (i.e. $f(z)\not=0$ for $z\in\bD$). This definition can be used for rational functions, however, not for arbitrary analytic functions. As an example of the singular inner function $\calI$ shows, the inequality in \eqref{21} might be strict in this case ($|\calI(0)|<1$, while $\int_0^{2\pi}\log|\calI^*(e^{i\tht})|\,d\tht=0$) and the equality may not hold in \eqref{21} as it is incorrectly claimed in \cite[p.574]{PP}.

Another important property of minimum-phase functions was introduced by Robinson \cite{Rob}. Namely he proved the following

\begin{theorem}
Let $f(z)=\sum_{n=0}^\iy a_nz^n$ and $g(z)=\sum_{n=0}^\iy b_nz^n$ be functions from $H_2$ satisfying $|f^*(e^{i\tht})|=|g^*(e^{i\tht})|$ for a.e. $\tht$. If $f$ is of minimum-phase, then for each $N$,
\begin{equation}\label{31}
\sum_{n=0}^N |a_n|^2\geq \sum_{n=0}^N |b_n|^2.
\end{equation}
\end{theorem}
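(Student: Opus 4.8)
The plan is to reduce inequality \eqref{31} to a statement about the compression of an isometry to a finite-dimensional coordinate subspace. First I would write $g$ as an inner multiple of $f$. Since $f$ is of minimum-phase (outer) it has no zeros in $\bD$, so $\varphi:=g/f$ is analytic in $\bD$, and since $|f^*|=|g^*|$ a.e.\ its boundary values satisfy $|\varphi^*|=1$ a.e. To see that $\varphi$ is genuinely inner (in particular bounded), I would factor $g=B\calI g_0$ as in \eqref{22}: its outer factor $g_0$ has the same boundary modulus as $f$, and because representation \eqref{215} shows that an outer function is determined by $|f^*|$ up to a unimodular constant, we get $g_0=\lambda f$ with $|\lambda|=1$, whence $\varphi=\lambda B\calI$ is inner. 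Thus $g=\varphi f$.

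Writing $\varphi(z)=\sum_{k=0}^\iy c_kz^k$, the identity $g=\varphi f$ becomes the convolution $b_n=\sum_{k=0}^n c_k a_{n-k}$, so the map $T\colon(a_n)_{n\ge0}\mapsto(b_n)_{n\ge0}$ on $\ell^2$ is the lower-triangular Toeplitz operator with symbol $\varphi$. The crucial point is that $T$ is an \emph{isometry}: since $|\varphi^*|=1$ a.e., multiplication by $\varphi$ preserves the $H_2$-norm, and by Parseval this says exactly $\|(b_n)\|_{\ell^2}=\|(a_n)\|_{\ell^2}$, the $N=\iy$ (equality) case of \eqref{31}. For finite $N$, let $P_N$ be the orthogonal projection onto the first $N+1$ coordinates. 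Because $T$ is lower triangular, its rows $0,\dots,N$ involve only columns $0,\dots,N$, which gives the operator identity $P_NT=P_NTP_N$. Consequently
$$\Big(\sum_{n=0}^N|b_n|^2\Big)^{1/2}=\|P_NT(a_n)\|=\|P_NTP_N(a_n)\|\le\|TP_N(a_n)\|=\|P_N(a_n)\|=\Big(\sum_{n=0}^N|a_n|^2\Big)^{1/2},$$
where the inequality uses $\|P_N\|\le1$ and the final equality uses that $T$ is an isometry. Squaring yields \eqref{31}.

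I expect the only nontrivial step to be the first one, verifying that $\varphi=g/f$ is inner, since this is where the Hardy-space theory recalled in the introduction (the factorization \eqref{22} and the modulus-uniqueness of outer functions) is actually needed; once $g=\varphi f$ is in hand, the argument is pure Hilbert-space geometry, namely that a compression of an isometry to a coordinate subspace is a contraction. The same scheme should carry over to the matrix case, with $\varphi$ replaced by a matrix inner function and $T$ by a block lower-triangular Toeplitz operator, which is presumably the simplification over \cite{WG} promised in the abstract.
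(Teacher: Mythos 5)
Your proposal is correct and follows essentially the same route as the paper: the factorization $g=\varphi f$ with $\varphi$ inner is the paper's appeal to Beurling's theorem (which you justify in more detail), your lower-triangularity identity $P_NT=P_NTP_N$ is precisely the paper's identity \eqref{52}, and your final chain of norm (in)equalities is the paper's concluding display read in reverse. Nothing further is needed.
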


Robinson gave a physical interpretation to inequality \eqref{31} ``that among all filters with the same gain, the outer filter makes the energy built-up as large as possible, and it does so for every positive time" \cite{Rob1} and found geological applications of minimum-phase waveforms. Therefore, the term {\em minimum-delay} \cite[p. 211]{kai99} functions can be equivalently used for optimal functions and Theorem 1 is known as Energy Delay Theorem within a geological community \cite[p. 52]{Clae}

Theorem 1 was further extended to polynomial matrix case and used in MIMO communications in \cite{WG}. We formulate this result in general matrix form after introducing corresponding notations.

In the present paper, we provide a very simple proof of Theorem 1 based on classical facts from the theory of Hardy spaces. We give a matrix generalization of this proof as well in Section 4. Some engineering applications of Theorems 1 and 2 can be found in \cite{Clae} and \cite{WG}.

\section{Notation}

Let $L_p=L_p(\bT)$, $0<p\leq\iy$, be the Lebesgue space of $p$-integrable complex functions $f^*$ with the usual norm $\|f^*\|_{L_p}=\big(\int_0^{2\pi}|f^*(e^{i\tht})|^p\,d\tht\big)^\frac{1}{p}$ for $p\geq 1$ (with standard modification for $p=\infty$), and let $H_p=H_p(\bD)$, $0<p\leq\iy$, be the Hardy space
$$
\left\{f\in\mathcal{A}(\mathbb{D}):\sup\limits_{r<1}
\int\nolimits_0^{2\pi}|f(re^{i\theta})|^p\,d\theta<\infty\right\}
$$
with the norm $\|f\|_{H_p}=\sup_{r<1}\|f(re^{i\cdot})\|_{L_p}$ for $p\geq 1$ ($H_\infty$ is the space of bounded analytic functions with the supremum norm). It is well-known that boundary values function $f^*$ (see \eqref{01}) exists for every $f\in H_p$, $p>0$, and belongs to $L_p$. Furthermore
\begin{equation}\label{N11}
\|f\|_{H_p}=\|f^*\|_{L_p}
\end{equation}
for every $p\geq 1$, and it follows from standard Fourier series theory that
\begin{equation}\label{N12}
\left\|\sum\nolimits_{n=0}^\iy a_nz^n\right\|_{H_2}=\left(\sum\nolimits_{n=0}^\iy|a_n|^2\right)^{1/2}.
\end{equation}

The condition \eqref{02} holds for every $f\in H_p$ and the function $f$ is called outer if the representation \eqref{215} is valid as well. We have the equality (the optimality condition) instead of the inequality in \eqref{21} if and only if $f$ is outer (see \cite[Th. 17.17]{Ru}). One can check, using the H\"{o}lder inequality, that if $f$ and $g$ are outer functions from $H_p$ and $H_q$, respectively, then the product $fg$ is the outer function from $H_{{pq}/(p+q)}$.

We repeat that $u\in\calA(\bD)$ is called a inner function if $u\in H_\iy$ and
\begin{equation}\label{u1}
|u^*(e^{i\tht})|=1\;\;\text{ for a.a. } \tht\in[0,2\pi).
\end{equation}

Now we consider matrices and matrix functions. $\bC^{d\tm d}$, $L_p^{d\tm d}$, etc., denote the set of $d\tm d$ matrices with entries from $\bC$, $L_p$, etc. The elements of $L_p^{d\tm d}$ (resp. $H_p^{d\tm d}$) are assumed to be matrix functions with domain $\bT$ (resp. $\bD$) and range $\bC^{d\tm d}$, and without any ambiguity we assume that $F^* \in L_p^{d\tm d}$ for $F\in H_p^{d\tm d}$.

For $M\in \bC^{d\tm l}$, we consider the Frobenius norm of $M$:
$$
\|M\|_2=\big(\sum\nolimits_{i=1}^d\sum\nolimits_{j=1}^l|m_{ij}|^2\big)^{1/2}=\big(\Tr(MM^H)\big)^{1/2},
$$
where $M^H=\ol{M}^T$, and for $F\in H_p^{d\tm d}$, we define
$$
\|F\|_{H_2^{d\tm d}}=\big(\sum\nolimits_{i=1}^d\sum\nolimits_{j=1}^d|f_{ij}|_{H_2}^2\big)^{1/2}.
$$
Similarly, we define $\|F^*\|_{L_2^{d\tm d}}$ for $F^* \in L_2^{d\tm d}$. By virtue of \eqref{N11}, we have
\begin{equation}\label{R1}
\|F\|_{H_2^{d\tm d}}=\|F^*\|_{L_2^{d\tm d}}
\end{equation}
and, similarly to \eqref{N12},
\begin{equation}\label{N21}
\left\|\sum\nolimits_{n=0}^\iy A_nz^n\right\|_{H_2^{d\tm d}}=\left(\sum\nolimits_{n=0}^\iy\|A_n\|_2^2\right)^{1/2}
\end{equation}
for any sequence of matrix coefficients $A_0, A_1,\ldots$ from $\bC^{d\tm d}$.

A matrix function $F\in H_2^{d\tm d}$ is called {\em outer}, if $\det F$ is an outer function from $H_{2/d}$. This definition is equivalent to number of other definitions of outer matrix functions (see, e.g., \cite{Ep10}). On the other hand, a matrix function $U\in \calA(\bD)^{d\tm d}$ is called {\em inner}, if $U\in H_\iy^{d\tm d}$ and $U^*$ is unitary a.e.,
\begin{equation}\label{U1}
U^*(e^{i\tht})\big(U^*(e^{i\tht})\big)^H=I_d  \;\;\text{ for a.a. } \tht\in[0,2\pi).
\end{equation}

We will make use of the following standard result from the theory of Hardy spaces (see \cite[p. 109]{Ko})

{\bf Smirnov's Generalized Theorem:} if $f=g/h$, where $g\in H_p$, $p>0$,  $h$ is an outer function from $H_q$, $q>0$, and $f^*\in L_r$, $r>0$, then $f\in H_r$.

\section{Proof of Theorem 1}

For a positive integer $N$, let $P_N$ be the projector operator on $H_2$ defined by
$$
P_N:\sum\nolimits_{n=0}^\iy a_nz^n \longmapsto \sum\nolimits_{n=0}^N a_nz^n.
$$
In order to prove Theorem 1, we need to show that (see \eqref{N12})
\begin{equation}\label{51}
\|P_N(f)\|_{H_2}\geq \|P_N(g)\|_{H_2}\,.
\end{equation}
For any bounded analytic function $u\in H_\iy$, we have
\begin{equation}\label{52}
P_N(uf)=P_N\big(u\cdot P_N(f)\big)
\end{equation}
since $P_N\big(u\cdot P_N(f)\big)=P_N\big(u(f-(f- P_N(f)))\big)=P_N(uf)-P_N\big(u(f- P_N(f))\big)=P_N(uf)$. Here we utilized that
$f- P_N(f)=\sum\nolimits_{n=N+1}^\iy a_nz^n$ and, because of analyticity of $u$, we have $u(f- P_N(f))=\sum\nolimits_{n=N+1}^\iy c_nz^n$ for some coefficients $c_{N+1}, c_{N+2},\ldots$

By virtue of Beurling factorization theorem \eqref{22}, there exits an inner function $u$ such that $g=uf$. Therefore, taking into account \eqref{N11}, \eqref{u1}, and \eqref{52}, we get
$$
\|P_N(f)\|_{H_2}=\|u\cdot P_N(f)\|_{H_2}\geq \|P_N\big(u\cdot P_N(f)\big)\|_{H_2}=\|P_N(uf)\|_{H_2}=\|P_N(g)\|_{H_2}.
$$
Thus \eqref{51} follows and Theorem 1 is proved.

\section{The matrix case}

In this section we prove the following

\begin{theorem}
Let $F(z)=\sum_{n=0}^\iy A_nz^n$, $A_n\in \bC^{d\tm d}$, and $G(z)=\sum_{n=0}^\iy B_nz^n$, $B_n\in \bC^{d\tm d}$, be matrix functions from $H_2^{d\tm d}$ satisfying
\begin{equation}\label{61}
F^*(e^{i\tht})\big(F^*(e^{i\tht})\big)^H=G^*(e^{i\tht})\big(G^*(e^{i\tht})\big)^H   \;\;\text{ for a.a. } \tht\in[0,2\pi).
\end{equation}
If $F$ is optimal, then for each $N$,
\begin{equation}\label{62}
\sum_{n=0}^N \|A_n\|_2^2\geq \sum_{n=0}^N \|B_n\|_2^2.
\end{equation}
\end{theorem}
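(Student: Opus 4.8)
The plan is to reduce Theorem 2 to exactly the truncation argument used for Theorem 1, the only genuinely new ingredient being a matrix analogue of the scalar factorization $g=uf$. Concretely, I would first produce an inner matrix function $U$ with $G=FU$, and then transcribe the scalar estimate, replacing the modulus by the Frobenius norm and $|u^*|=1$ by the unitarity of $U^*$.

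To construct $U$, set $U:=F^{-1}G$, understood through the boundary values, so that $U^*=(F^*)^{-1}G^*$ a.e.; this makes sense because $F$ optimal forces $\det F$ to be outer, whence $(\det F)^*\neq 0$ and $F^*$ is invertible a.e. Using \eqref{61}, a direct computation gives
$$
U^*(U^*)^H=(F^*)^{-1}G^*(G^*)^H\big((F^*)^{-1}\big)^H=(F^*)^{-1}F^*(F^*)^H\big((F^*)^{-1}\big)^H=I_d
$$
a.e., so $U^*$ is unitary a.e.; in particular every entry $u^*_{ij}$ is bounded, hence lies in $L_\iy$. It remains to check that $U\in H_\iy^{d\tm d}$, which I regard as the main obstacle. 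Writing $U=(\det F)^{-1}\,\mathrm{adj}(F)\,G$, each entry of the numerator $\mathrm{adj}(F)\,G$ is a sum of products of $d$ functions from $H_2$, hence lies in $H_{2/d}$, while $\det F$ is outer in $H_{2/d}$. Since the boundary values $u^*_{ij}$ are bounded, Smirnov's Generalized Theorem (applied with $r=\iy$) yields $u_{ij}\in H_\iy$ for every $i,j$. Thus $U\in H_\iy^{d\tm d}$ with $U^*$ unitary a.e., i.e. $U$ is inner and $G=FU$.

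With this factorization in hand, I would introduce the truncation projector $P_N$ on $H_2^{d\tm d}$, $P_N\big(\sum_n A_nz^n\big)=\sum_{n=0}^N A_nz^n$, and record the identity $P_N(FU)=P_N\big(P_N(F)\,U\big)$, valid because $\big(F-P_N(F)\big)U$ contains only powers $z^n$ with $n\geq N+1$ (here the analyticity of $U$ is used). The Frobenius norm is invariant under right multiplication by a unitary matrix, $\|MV\|_2=\|M\|_2$, so multiplication by $U^*$ preserves the $L_2^{d\tm d}$ norm pointwise a.e. Combining these facts with \eqref{R1}, \eqref{N21}, and the fact that the truncation $P_N$ does not increase the $H_2^{d\tm d}$ norm, I would chain
$$
\|P_N(F)\|_{H_2^{d\tm d}}=\|P_N(F)\,U\|_{H_2^{d\tm d}}\geq\|P_N\big(P_N(F)\,U\big)\|_{H_2^{d\tm d}}=\|P_N(FU)\|_{H_2^{d\tm d}}=\|P_N(G)\|_{H_2^{d\tm d}},
$$
which is precisely \eqref{62} after expanding both sides via \eqref{N21}.

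The decisive step is the passage from \emph{the boundary values of $U$ are unitary} to \emph{$U$ is a bounded analytic matrix function}; everything else is a routine matrix rewriting of the scalar proof. This step is delicate because a priori $U=F^{-1}G$ is only a ratio of low-exponent Hardy functions, and without the outer hypothesis on $\det F$ one could not exclude a singular-inner denominator that would destroy the analyticity of $U$ inside $\bD$. Smirnov's Generalized Theorem is exactly the tool that converts the outerness of $\det F$, together with the boundedness of $U^*$, into the membership $U\in H_\iy^{d\tm d}$.
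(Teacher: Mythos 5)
Your proposal is correct and follows essentially the same route as the paper: both define $U=F^{-1}G$, use the outerness of $\det F$ together with the boundedness of the a.e.\ unitary boundary values and Smirnov's Generalized Theorem to conclude $U\in H_\iy^{d\tm d}$, and then run the truncation identity plus the unitary invariance of the Frobenius norm to obtain the chain of (in)equalities. The only difference is that you spell out a couple of details the paper leaves implicit (why $F^*$ is invertible a.e.\ and why the entries of $\mathrm{adj}(F)G$ lie in $H_{2/d}$), which is harmless.
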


\begin{proof}
Let $\bP_N$ be the projector operator on $H_2^{d\tm d}$ defined by
$$
\bP_N: \sum\nolimits_{n=0}^\iy A_nz^n\longmapsto \sum\nolimits_{n=0}^N A_nz^n.
$$
By virtue of \eqref{N21}, we have to prove that
\begin{equation}\label{62}
\|\bP_N(F)\|_{H_2^{d\tm d}}\geq \|\bP_N(G)\|_{H_2^{d\tm d}}
\end{equation}

Let
\begin{equation}\label{63}
U(z)=F^{-1}(z)G(z).
\end{equation}
It follows from \eqref{61} that \eqref{U1} holds. Therefore $U^*\in L_\iy^{d\tm d}$.
Since, in addition, $F^{-1}(z)=\frac1{\det F(z)}\Cof\big(F(z)\big)$, where $\det F(z)$ is an outer function, by the generalized Smirnov's theorem (see Sect. 2), we have $U \in H_\iy^{d\tm d}$.

Exactly in the same manner as \eqref{52} was proved, we can show that
\begin{equation}\label{635}
\bP_N(FU)=\bP_N\big( \bP_N(F)\cdot U\big).
\end{equation}
Since  unitary transformations preserve standard Euclidian norm of the space $\bC^d$, it follows from \eqref{U1} that, for any $V\in\bC^{1\tm d}$,
\begin{equation}\label{65}
\|V\|_2=\|V\cdot U^*(e^{i\tht})\|_2 \;\;\text{ for a.a. } \tht\in[0,2\pi).
\end{equation}
Therefore, by virtue of \eqref{R1} and \eqref{65},
\begin{equation}\label{66}
\|X\|_{H_2^{d\tm d}}=\|X^*\|_{L_2^{d\tm d}}=\|X^*U^*\|_{L_2^{d\tm d}}=\|XU\|_{H_2^{d\tm d}}
\end{equation}
for any $X\in H_2^{d\tm d}$. It follows now from \eqref{66}, \eqref{635}, and \eqref{63}   that
$$
\|\bP_N(F)\|_{H_2^{d\tm d}}=\|\bP_N(F)\cdot U\|_{H_2^{d\tm d}}\geq \|\bP_N\big(\bP_N(F)\cdot U\big)\|_{H_2^{d\tm d}}=
\|\bP_N(FU)\|_{H_2^{d\tm d}}=\|\bP_N(G)\|_{H_2^{d\tm d}}
$$
Thus \eqref{62} is true and Theorem 2 is proved.
\end{proof}

\section{Acknowledgments}

The first author would like to express his gratitude towards Professor Anthony Ephremides for indicating him several practical applications of minimum-phase signals, which naturally arises in mathematics via spectral factorization, in communication.

The manuscript was ready for submission when Professor Ilya Spitkovsky informed us that Robinson's Energy Delay Theorem is proved also in \cite{Goh-94}  for general operator valued functions in abstract Hilbert spaces. The authors are grateful to him for indicating this reference.

\def\cprime{$'$}
\providecommand{\bysame}{\leavevmode\hbox to3em{\hrulefill}\thinspace}
\providecommand{\MR}{\relax\ifhmode\unskip\space\fi MR }
\providecommand{\MRhref}[2]{%
  \href{http://www.ams.org/mathscinet-getitem?mr=#1}{#2}
}
\providecommand{\href}[2]{#2}

\end{document}